\documentclass[12pt,letterpaper,reqno]{amsart}
\usepackage[letterpaper,margin=1.2in,headheight=15pt]{geometry} 
\usepackage{graphicx,bbm}
\usepackage{amsmath, amssymb, amsfonts, stmaryrd}
\usepackage[utf8]{inputenc}
\usepackage{epstopdf}
\usepackage{tikz-cd} % commutative diagrams
\usepackage{setspace}
\usepackage{xcolor}
\usepackage{tcolorbox}
\DeclareGraphicsRule{.tif}{png}{.png}{`convert #1 `dirname #1`/`basename #1 .tif`.png}
\setlength{\columnsep}{2.5cm} 
\usepackage[titletoc,title]{appendix}
\usepackage{enumitem}
%\usepackage[backend=biber]{biblatex}
%\usepackage[backend=bibtex,style=alphabetic,natbib=true]{biblatex}
%\addbibresource{diffeq.bib}

\usepackage{hyperref}
\definecolor{darkred}{rgb}{0.5,0.15,0.15}
\hypersetup{colorlinks=true,urlcolor=darkred,linkcolor=darkred,citecolor=darkred}

\newcommand{\C}{\mathbb{C}}

\newcommand{\Z}{\mathbb{Z}}
\newcommand{\R}{\mathbb{R}}

%\newcommand{\cL}{\ensuremath{\mathcal L}}
 % placeholder for a curve

\renewcommand{\ell}{X} % elliptic surface

\newcommand{\dif}{\mathrm{d}}

\newcommand{\I}{{\mathrm i}}

\newcommand{\np}{\ensuremath{\mathrm {np}}}

\newcommand{\be}{\begin{eqnarray}}
	\newcommand{\ee}{\end{eqnarray}}
\newcommand{\bea}{\begin{eqnarray}}
	\newcommand{\eea}{\end{eqnarray}}

\newcommand{\ben}{\begin{eqnarray}}
	\newcommand{\een}{\end{eqnarray}}

\providecommand{\Li}{\mathop{\rm Li}\nolimits}

\theoremstyle{plain}
\newtheorem{thm}{Theorem}
\newtheorem{prop}[thm]{Proposition}

\theoremstyle{definition}

\newtheorem*{question*}{Question}

\theoremstyle{remark}
\newtheorem{rem}[thm]{Remark}

\numberwithin{equation}{section}

\address{Fachbereich Mathematik, Universit\"at Hamburg, Bundesstr. 55, 20146, Hamburg}

\title{On the integrable hierarchy for the resolved conifold}

\author{Murad Alim and Arpan Saha}
%\address{Fachbereich Mathematik, Universit\"at Hamburg, Bundesstr. 55, 20146, Hamburg}
%Insert `2000 Mathematics Subject Classification' numbers here:
%\classno{}

%\extraline{}

\begin{document}
	\maketitle
	
	\begin{abstract}
		
		We provide a direct proof of a conjecture of Brini relating the Gromov--Witten theory of the resolved conifold to the Ablowitz--Ladik integrable hierarchy at the level of primaries. In doing so, we use a functional representation of the Ablowitz--Ladik hierarchy as well as a difference equation for the Gromov--Witten potential. In particular, we express certain distinguished solutions of the difference equation in terms of an analytic function which is a specialization of a Tau function put forward by Bridgeland in the study of wall-crossing phenomena of Donaldson--Thomas invariants. 
		
		%\noindent MSC classification: 14N35, 53D45
	\end{abstract}

	\section{Introduction}
	The study of integrable structures as the underlying organizing principle of Gromov--Witten (GW) theory has been a great source of insights and interactions between various areas of mathematics such as algebraic and symplectic geometry, the study of integrable systems, and mathematical physics.
	
	The KdV integrable hierarchy for GW theory of a point was put forward in the works of Witten \cite{Witten} and Kontsevich \cite{Kontsevich}, relating the intersection theory on the Deligne--Mumford compactification of the moduli space of curves to $2$d topological gravity. For $\mathbb{P}^1$, the relation of GW theory to the Toda integrable hierarchy was studied in \cite{EHY,EY,EYY,EHX,Pandharipande}. In \cite{Pandharipande}, a conjectured difference equation for the GW potential implied a corresponding difference equation for Hurwitz numbers which was proved in \cite{Okounkov}. 
	
	A more general construction of an integrable hierarchy for a given Gromov--Witten theory was given by Dubrovin and Zhang in \cite{DZ}; see also \cite{Dubrovinrev} for an overview. Yet explicit examples remain sparse; see also \cite{ADKMV} for a relation between topological string theory and integrable hierarchies. For the Calabi--Yau threefold geometry given by the resolved conifold, Brini conjectured in \cite{Brini} that the equivariant descendent Gromov--Witten potential of the resolved conifold with anti-diagonal $\C^\times$ action coincides with the logarithm of a tau function of the Ablowitz--Ladik (AL) hierarchy introduced in \cite{Ablowitz-Ladik} under a suitable identification of the variables. In particular, the conjecture implies that the \emph{primary} GW potential of the resolved conifold (i.\,e.\ without gravitational descendants)  coincides with the restriction of a tau function of the AL hierarchy to the small phase space. 
	
	In \cite{Brini}, Brini provided a proof of the coincidence of the descendent Gromov--Witten potential with the logarithm of an AL tau function to genus $1$, and of the primary Gromov--Witten potential with the restriction of the logarithm of the tau function to the small phase space up to genus $2$. A proof of the conjecture in the stationary sector to all genera was subsequently given by Takasaki  in \cite{Takasaki_2013} using a melting crystal model of Gromov--Witten theory of the resolved conifold. This was essentially a two-step proof: First, the descendent potential was identified with a tau function of the $2$d Toda hierarchy. And then, it was shown that owing to a certain factorization property which the specific tau function in question possessed, it could be interpreted as a tau function of the relativistic Toda hierarchy, a reduction of the $2$d Toda hierarchy which was known to be equivalent to the Ablowitz--Ladik hierarchy.

	In complementary recent developments, Bridgeland put forward in \cite{BridgelandDT} a Riemann--Hilbert problem associated to wall-crossing of Donaldson--Thomas invariants. This was applied in \cite{BridgelandCon} to the resolved conifold, putting forward a Tau function as a solution to the Riemann--Hilbert problem, which in particular provides the Gromov--Witten potential as an asymptotic expansion.
	
	In this work, we give a proof of the identification of the primary Gromov--Witten potential with an AL tau function to all genera. 
	The main ingredients of our proof are two difference equations. One of these is obtained from a certain functional representation of the AL hierarchy due to Vekslerchik \cite{Veks1,Veks2}, while the other was proved by one of us in \cite{alim2020difference} by adapting ideas which appeared in the study of the exact WKB method in \cite{Iwaki}. In particular, by combining this with the results of Bridgeland in \cite{BridgelandCon}, we are able to obtain a closed-form expression for the tau function, thus providing a new, but perhaps expected, link between integrable hierarchies of GW theory and Bridgeland's DT Riemann--Hilbert problem.
	
	The outline of this paper is as follows: In §\ref{sec:GW}, we recall notions from the Gromov--Witten theory of the resolved conifold, introduce the difference equation proved in \cite{alim2020difference},  and construct a solution of it using Bridgeland's Tau function \cite{BridgelandCon}. In §\ref{sec:AL}, we recall Vekslerchik's functional representation of the Ablowitz--Ladik hierarchy. In §\ref{sec:DL}, we show how to take the dispersionless limit of the AL hierarchy in this setting and give a version of Dubrovin's proof of the correspondence between the equivariant genus zero Gromov--Witten potential of the resolved conifold and the dispersionless AL hierarchy adapted to our setting. Finally, in §\ref{sec:Tau}, we bring all the ingredients together to extend a result of Brini.
	
	\subsection*{Acknowledgments}
	We would like to thank Andrea Brini and J\"org Teschner for correspondence and comments on the draft. We also thank the anonymous referee for carefully reading through our submitted draft and offering detailed feedback. This work is supported through the DFG Emmy Noether grant AL 1407/2-1.

	%%%%%%%%%%%%%%%%%%%%%%%%%%%%%%%%%%%%%%%%%%%%%%%%%

	\section{Gromov--Witten potential and difference equation}\label{sec:GW}
	Gromov--Witten (GW) theory of a non-singular algebraic variety $X$ is concerned with the study of integrals over the moduli spaces of maps from Riemann surfaces into $X$.
	Let $X$ be a Calabi--Yau threefold. The GW potential of $X$ is the following formal power series:
	\begin{equation}
		F(\lambda,\boldsymbol t) = \sum_{g\ge 0}  \lambda^{2g-2} F^g(\boldsymbol t)= \sum_{g\ge 0}  \lambda^{2g-2} \sum_{\beta\in H_2(X,\mathbb{Z})}  N^g_{\beta} \,Q^{\beta}\, ,
	\end{equation}
	where $Q^{\beta} := \exp (2\pi \I  \langle\beta,\boldsymbol t\rangle)$ is a formal variable living in a suitable completion of the effective cone in the group ring of $H_2(X,\mathbb{Z})$, $\lambda$ is a formal parameter corresponding to the topological string coupling, and $N^g_{\beta}$ are the GW invariants. 
	
	The GW potential can be written as
	\begin{equation}
		F=F_{\beta=0} + \widetilde{F}\,,
	\end{equation}
	where $F_{\beta=0}$ denotes the contribution from constant maps and $ \widetilde{F}$ the contribution from non-constant maps. The constant-map contributions at genus 0 and 1 are $\boldsymbol t$-dependent and the higher-genus constant-map contributions take the universal form \cite{Faber}
	\begin{equation}
		F_{\beta=0}^g = \frac{(-1)^{g-1}\,\chi(X)\, B_{2g}\, B_{2g-2}}{4g (2g-2)\, (2g-2)!}\,, \quad g\ge2\,,
	\end{equation}
	where $\chi(X)$ is the topological Euler characteristic of $X$ and $B_{j}$ denotes the $j$-th Bernoulli number. 
	
	This note is concerned with an equivariant version of the GW potential of the CY threefold given by the total space of the following rank two bundle over the projective line:
	\begin{equation}
		\mathcal{O}(-1) \oplus \mathcal{O}(-1) \rightarrow \mathbb{P}^1\,.
	\end{equation}
	This corresponds to the resolution of the conifold singularity in $\mathbb{C}^4$ and is known as the resolved conifold. Its \emph{small phase space} $H^\bullet(X;\C)$ is spanned by an identity element $\mathrm{id}$ and the Kähler class $\omega$ of the base $\mathbb{P}^1$. The associated coordinates shall be denoted $x:=\langle \mathrm{id},\boldsymbol t\rangle$ and $t:=\langle \omega,\boldsymbol t\rangle$ respectively. The (non-equivariant) GW potential for this geometry was determined in physics \cite{Gopakumar:1998ii,GV}, and in mathematics \cite{Faber} with the following outcome for the non-constant maps:%\footnote{See also \cite{MM} for the determination of $F^g$ from a string theory duality and the explicit appearance of the polylogarithm expressions.}
	\begin{equation} \label{conpert}
		\widetilde{F}^0= \Li_{3}(Q)\,, \quad \widetilde{F}^g=\frac{(-1)^{g-1}B_{2g}}{2g (2g-2)!}\, \Li_{3-2g} (Q)\,, \quad g\ge1\,,
	\end{equation}
	where $Q:=\exp(2\pi \I t)$ and the polylogarithm is defined by
	\begin{equation}
		\Li_s(z) = \sum_{n=1}^{\infty} \frac{z^n}{n^s}\, ,\quad s\in \mathbb{C}\,.
	\end{equation}
	As an expression for the constant map contribution at genus $1$ we take the following; see e.~g.~\cite{Vaz}:  
	\begin{equation}
		F^1_{\beta=0} = \frac{\pi\I t}{12}+\zeta'(-1)\,,
	\end{equation}
	where $\zeta'(-1)$ denotes the derivative of the Riemann zeta function at $-1$. We note that this constant part of the expression is ambiguous and not defined in Gromov--Witten theory, while the $t$-dependent term has an enumerative meaning; see for instance \cite{threequestions}.
	
	There is an anti-diagonal $\C^\times$ action on $\mathcal{O}(-1) \oplus \mathcal{O}(-1)$, given by fiberwise $\C^\times$ action on the $\mathcal{O}(-1)$ with opposite characters. This $\C^\times$ action is especially interesting as it acts trivially on the canonical bundle, meaning that  $\mathcal{O}(-1) \oplus \mathcal{O}(-1)$ is then \emph{equivariantly} Calabi--Yau. With respect to this action, one can define an equivariant analogue of Gromov--Witten theory where the ordinary cohomology ring of the resolved conifold is replaced with the equivariant cohomology. The equivariant Gromov--Witten invariants of rank $2$ bundles over curves were computed by Bryan and Pandharipande in \cite{BP08}. The theory significantly simplifies in the case of the resolved conifold: It is then just the contribution from genus zero constant maps which is affected. In particular, for the anti-diagonal action, the genus zero Gromov--Witten potential becomes
	\begin{equation}
		F_{\mathrm{ad}}^{ 0} = \frac{(2\pi)^3\I x^2t}{2\kappa^2} + \widetilde F^{ 0}\,,
	\end{equation}
	where  the $\mathrm{ad}$ in the subscript denotes that it is equivariant with respect to the \emph{anti-diagonal} action and $\kappa$ is the equivariant parameter.
	
	The following theorem was proved in \cite{alim2020difference}:
	\begin{thm} \label{diffeq} \cite{alim2020difference} The contribution of the non-constant maps $\widetilde{F}(\lambda,t)$ to the GW potential of the resolved conifold satisfies the following difference equation:
		\begin{equation}\label{eq:diff}
			\widetilde{F}(\lambda,t+\check{\lambda}) - 2 \widetilde{F}(\lambda,t) + \widetilde{F}(\lambda,t-\check{\lambda})= \left(\frac{1}{2\pi }\frac{\partial}{\partial t}\right)^2\, \widetilde{F}^0(t) \,,\quad \check{\lambda}=\frac{\lambda}{2\pi}\,.
		\end{equation}
	\end{thm}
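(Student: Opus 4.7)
The plan is to verify the difference equation by passing to the Gopakumar--Vafa resummed form of $\tilde F$ and computing both sides directly on each Fourier mode in $q$. The key input is the identity
\begin{equation*}
\tilde F(\lambda,t) \;=\; \sum_{n\geq 1} \frac{q^n}{n\,\bigl(2\sin(n\lambda/2)\bigr)^2},
\end{equation*}
to be understood as an equality of formal Laurent series in $\lambda$ with coefficients that are convergent $q$-series on $|q|<1$. First I would derive this identity from \eqref{conpert}: expand $\bigl(2\sin(x)\bigr)^{-2} = \tfrac{1}{4}\csc^2(x)$ using the Bernoulli-number expansion $\csc^2(x) = x^{-2} + \sum_{k\geq 1}(-1)^{k-1}2^{2k}(2k-1)B_{2k}\,x^{2k-2}/(2k)!$, obtained by differentiating the classical expansion of $\cot(x)$, then substitute $x = n\lambda/2$ and sum over $n\geq 1$. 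The resulting coefficient of $\lambda^{2g-2}$ equals $(-1)^{g-1}(2g-1)B_{2g}\,\mathrm{Li}_{3-2g}(q)/(2g)!$, which coincides with $\tilde F^g$ via the trivial rearrangement $(2g-1)/(2g)! = 1/(2g\,(2g-2)!)$.

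Once the Gopakumar--Vafa form is available, the theorem reduces to a one-line computation on each mode. The shift $T_a : t \mapsto t + a$ multiplies $q^n = \exp(2\pi \I n t)$ by $\exp(2\pi \I n a)$, so with $a = \pm\check\lambda = \pm\lambda/(2\pi)$ the second-difference operator $T_{\check\lambda} - 2 + T_{-\check\lambda}$ acts on the $n$th mode as multiplication by
\begin{equation*}
e^{\I n \lambda} - 2 + e^{-\I n\lambda} \;=\; -\bigl(2\sin(n\lambda/2)\bigr)^2,
\end{equation*}
which exactly cancels the denominator in the resummation. Hence the left-hand side of \eqref{eq:diff} collapses to $-\sum_{n\geq 1} q^n/n = \log(1-q)$. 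On the right, $\partial_t = 2\pi \I\, q\partial_q$ together with $q\partial_q \mathrm{Li}_s(q) = \mathrm{Li}_{s-1}(q)$ yields
\begin{equation*}
\left(\frac{1}{2\pi}\frac{\partial}{\partial t}\right)^{\!2}\! \tilde F^0(t) \;=\; -(q\partial_q)^2 \mathrm{Li}_3(q) \;=\; -\mathrm{Li}_1(q) \;=\; \log(1-q),
\end{equation*}
matching the left-hand side.

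The only step requiring genuine care is the resummation itself: one has to handle sign conventions in the $\csc^2$ expansion cleanly, confirm that the rearrangement $(2g-1)/(2g)! = 1/(2g\,(2g-2)!)$ reproduces exactly the coefficients in \eqref{conpert}, and keep the factors of $2\pi$ consistent both between $\lambda$ and $\check\lambda$ and between $\partial_t$ and $q\partial_q$. After this bookkeeping, no further structural input is needed: the equation falls out mode by mode in the $q$-expansion.
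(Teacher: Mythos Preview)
Your argument is correct. The Gopakumar--Vafa resummation
\[
\tilde F(\lambda,t)=\sum_{n\ge 1}\frac{q^n}{n\,(2\sin(n\lambda/2))^2}
\]
follows from \eqref{conpert} exactly as you outline, and the second-difference operator indeed multiplies the $n$th mode by $-(2\sin(n\lambda/2))^2$, collapsing the left-hand side to $\log(1-q)$; the right-hand side gives the same via $-(q\partial_q)^2\mathrm{Li}_3(q)=-\mathrm{Li}_1(q)=\log(1-q)$. The bookkeeping checks out.

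As for comparison: the present paper does not actually prove Theorem~\ref{diffeq}. It is quoted from \cite{alim2020difference}, where, according to the introduction, the argument adapts exact-WKB ideas from \cite{Iwaki} rather than the direct Gopakumar--Vafa mode computation you give. The paper does, however, establish the closely related identity for the analytic function $F_{\mathrm{np}}=\log G(t\,|\,\check\lambda,1)$ in the proof of Proposition~\ref{prop:uniqueness}, using Bridgeland's difference equations for the multiple-sine building blocks $G$ and $H$; there the same right-hand side $\log(1-q)$ appears via \eqref{eq:H-dif}. So your route---formal $q$-expansion plus a trigonometric cancellation---is more elementary and self-contained than either the cited WKB approach or the special-function machinery used for $F_{\mathrm{np}}$, at the cost of working only at the level of the asymptotic series rather than an honest analytic function.
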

	
	We proceed by obtaining a solution of the difference equation. The latter already appears as a building block of the Tau function determined by Bridgeland in \cite{BridgelandCon} as a solution to a Riemann--Hilbert problem associated to wall-crossing of DT invariants of the resolved conifold. The special functions in \cite{BridgelandCon} involve the multiple sine functions which are defined using the Barnes multiple Gamma functions \cite{Barnes}. For a variable $z\in \mathbb{C}$ and parameters $\omega_1,\ldots,\omega_r \in \mathbb{C}^{\times}$, these are defined by
	\begin{equation}
		\sin_r(z\,|\, \omega_1,\dots,\omega_r):= \Gamma_{r}(z\, |\, \omega_1,\dots,\omega_r) \,\, \Gamma_{r}\bigg(\sum_{i=1}^r \omega_i - z\, |\, \omega_1,\dots,\omega_r\bigg)^{(-1)^r}.
	\end{equation}
	For further definitions, see e.~g.~ \cite{BridgelandCon,Ruijsenaars1} and references therein. We will furthermore need the generalized Bernoulli polynomials, defined by the generating function
	\begin{equation}
		\frac{x^r\, e^{zx}}{ \prod_{i=1}^r (e^{\omega_i x}-1)} = \sum_{n=0}^{\infty} \frac{x^n}{n!} \, B_{r,n}(z\,|\, \omega_1,\,\dots,\omega_r)\,.
	\end{equation}
	Consider now the function $G(z\,|\,\omega_1,\omega_2)$ of \cite[Sec. 4.2]{BridgelandCon}, defined by
	\begin{equation}
		G(z\, | \, \omega_1,\omega_2) := \exp\left(\frac{\pi \I}{6}  B_{3,3}(z+\omega_1\,|\,\omega_1,\omega_1,\omega_2)\right)  \sin_3(z+\omega_1\, |\, \omega_1,\omega_1,\omega_2)\,,
	\end{equation}
	and define for $k\in\Z$ the functions\footnote{The subscript np of the function stands for non-perturbative. In \cite{BridgelandCon}, it is suggested that the Tau function of the DT Riemann--Hilbert problem gives a non-perturbative definition of the GW theory of the resolved conifold. The non-perturbative content of this function is discussed in a companion paper \cite{alim2021intrinsic}.}
	\begin{equation}
		\begin{split}
			F^+_{\np,k}(\lambda,t)&:= \log G(t+k\,|\,\check{\lambda},1)\,, \\ 
			\widehat F^+_{\np,k}(\lambda,t) &:= F^+_{\np,k}(\lambda,t)- F^+_{\np,0}(\lambda,0) - \frac{\zeta(3)}{\lambda^2} - \frac{1}{12} \log \check\lambda + \frac{\pi\I t}{12}+\zeta'(-1)\,,\\
			F^-_{\np,k}(\lambda,t)&:= F^+_{\np,k}(\lambda,-t) - \frac{(2\pi\I)^3}{3!}\frac{B_{1,3}(t\,|\,1)}{ \lambda^2} - \frac{\pi\I}{6}\left(t-\frac{1}{2}\right),\\
			\widehat F^-_{\np,k}(\lambda,t) &:= \widehat F^+_{\np,k}(\lambda,-t) - \frac{(2\pi\I)^3}{3!}\frac{B_{1,3}(t\,|\,1)}{ \lambda^2} + \frac{\pi\I}{12}\,.
		\end{split}
	\end{equation}
	We obtain the following:
	
	\begin{prop}\label{prop:uniqueness}
		The functions $F^\pm_{\np,k}(\lambda,t)$ and $\widehat F^\pm_{\np,k}(\lambda,t)$ are solutions of the difference equation \eqref{eq:diff} i.e. 
		\begin{equation}\label{eq:diff-f}
			f(\lambda,t+\check{\lambda}) - 2f(\lambda,t) + f(\lambda,t-\check{\lambda})= \log(1-Q)\,.
		\end{equation}
		Moreover, $F^\pm_{\np,k}(\lambda,t)$ and $\widehat F^\pm_{\np,k}(\lambda,t)$ have the following asymptotic expansions as $\lambda \rightarrow 0$ subject to $\mathrm{Re}\,\lambda > 0$, with $-k<\pm\mathrm{Re}\,t < -k + 1$ and $\pm\mathrm{Im}\,t > 0$:
		\begin{equation}\label{eq:asymptotic}
			F^\pm_{\np,k} (\lambda,t) \sim \sum_{g=0}^{\infty} \lambda^{2g-2} \widetilde{F}^{g}(t)\,,\quad  \widehat F^\pm_{\np,k} (\lambda,t) \sim  \frac{\widetilde F^0(t)}{\lambda^2}+\sum_{g=2}^{\infty} \lambda^{2g-2} F^{g}(t)\,,
		\end{equation}
		where $F^g(t)$ are the conifold free energies and $\widetilde{F}^g(t)$ are the non-constant-map contributions to them, as defined in \eqref{conpert}. 
		%The solution is furthermore unique given the above asymptotic expansion.
	\end{prop}
	\begin{proof}
		Notice that  $\widehat F^\pm_{\np,k}(\lambda,t)$ differs from $F^\pm_{\np,k}(\lambda,t)$ by a function that is a degree $1$ polynomial in $t$. Moreover, the difference equation \eqref{eq:diff-f} is invariant under the replacement $t \mapsto t + k$, while under the replacement $t\mapsto -t$, it becomes
		\begin{equation}
			\begin{split}
				&f(\lambda,-t+\check{\lambda}) - 2f(\lambda,-t) + f(\lambda,-t-\check{\lambda})
				= \log\bigg(1-\frac{1}{Q}\bigg)\\
				&=\log(1-Q) + \frac{(2\pi \I)^3}{3!\lambda^2}\left(B_{1,3}(t+\check{\lambda}\,|\,1) - 2B_{1,3}(t\,|\,1) + B_{1,3}(t-\check{\lambda}\,|\,1)\right).
			\end{split}
		\end{equation}
		Thus, it suffices to prove that $F^+_{\np,0}(\lambda,t)$ is a solution of the difference equation \eqref{eq:diff-f}. 
		
		By \cite[Prop.\ 4.2]{BridgelandCon}, the function $G(t\, |\, \check{\lambda},1)$ is single valued for $\check{\lambda}\notin \mathbb{R}_{<0} $ and satisfies the difference equation
		\begin{equation}\label{diffeqg}
			\frac{G(t + \check{\lambda}\, | \, \check{\lambda},1)}{G(t \, | \, \check{\lambda},1)} = H(t+\check{\lambda}\, | \, \check{\lambda},1)^{-1}\,,
		\end{equation}
		where $H$ is denoted by $F$ in \cite{BridgelandCon} and is given by
		\begin{equation}
			H(z \,| \,\omega_1, \omega_2 ) := \exp\left(-\frac{\pi \I }{2} B_{2,2}(z\,|\,\omega_1,\omega_2)\right) \, \sin_{2}(z\,|\, \omega_1,\omega_2)\,.
		\end{equation}
		This in turn satisfies the difference equation \cite[Prop.\ 4.1]{BridgelandCon}
		\begin{equation}\label{eq:H-dif}
			\frac{H(z + \omega_1 \,|\, \omega_1, \omega_2 )}{H(z \,|\, \omega_1, \omega_2 )}= \frac{1}{1-x_2}\,, \quad x_2=\exp\left(\frac{2\pi \I z}{\omega_2}\right).
		\end{equation}
		From \eqref{diffeqg} we obtain, by taking the logarithm
		\begin{equation}
			\begin{split}
				&\log G(t + \check{\lambda}\, | \, \check{\lambda},1) +  \log G(t - \check{\lambda}\, | \, \check{\lambda},1) -2 \log G(t \, | \, \check{\lambda},1) \\ 
				&= - \left(\log H(t+\check{\lambda}\, | \, \check{\lambda},1) - \log H(t\, | \, \check{\lambda},1) \right) = \log(1-Q)\,, \quad Q=\exp(2\pi \I t)\,.
			\end{split}
		\end{equation}
		For the asymptotic expansions of $F^\pm_{\np,k}$ and $\widehat F^\pm_{\np,k}$, we use \cite[Prop.\ 4.6 and 4.7]{BridgelandCon}, which in particular proves the asymptotic expansion of $G$ as $\omega_1\rightarrow 0$ subject to $\mathrm{Re}\,\omega_1 > 0$, with $0<\mathrm{Re}\,z<\mathrm{Re}\,\omega_2$ and $\mathrm{Im}(z/\omega_2)>0$, to be
		\begin{equation}
			\begin{split}
				\log G(z\, | \, \omega_1,\omega_2) &\sim   \sum_{k= 0}^{\infty} \frac{(k-1)\, B_{k}\, \omega_1^{k-2}}{k!} \, \left( \frac{2\pi \I}{\omega_2}\right)^{k-2} \Li_{3-k} \left( \exp\left(\frac{2\pi \I z}{ \omega_2}\right)\right),\\
				\log G(0\, | \, \omega_1,\omega_2) &\sim  -\zeta(3)\left(\frac{\omega_2}{\omega_1}\right)^2 + \frac{1}{12}\log\bigg(\frac{\omega_2}{\omega_1}\bigg)-\sum_{k= 3}^\infty\frac{B_k\,B_{k-2}}{k(k-2)(k-2)!}\left(\frac{2\pi\I \omega_1}{\omega_2}\right)^{k-2}.
			\end{split}
		\end{equation}
		As a consequence, given that $-k<\pm\mathrm{Re}\,t < -k + 1$ and $\pm\mathrm{Im}\,t > 0$, we have as $\lambda \rightarrow 0$ subject to $\mathrm{Re}\,\lambda > 0$, the asymptotic series
		\begin{equation}
			\begin{split}
				F^\pm_{\np,k}(\lambda,t) &\sim  \frac{1}{\lambda^2}\Li_{3}(Q)+\sum_{g=1}^{\infty} \lambda^{2g-2} \frac{(-1)^{g-1}B_{2g}}{2g (2g-2)!}\, \textrm{Li}_{3-2g}(Q)\,,\\
				\widehat F^\pm_{\np,k} (\lambda,t) &\sim \frac{1}{\lambda^2}\Li_{3}(Q)+\sum_{g=1}^{\infty} \lambda^{2g-2} \frac{(-1)^{g-1}B_{2g}}{2g (2g-2)!}\, \textrm{Li}_{3-2g}(Q)\\
				&\quad +\frac{\pi\I t}{12}+\zeta'(-1)+ \sum_{g=2}^\infty\lambda^{2g-2}\frac{(-1)^{g-1}B_{2g}B_{2g-2}}{2g (2g-2)\, (2g-2)!}\,.
			\end{split}
		\end{equation}
		In the above, we have made use of the fact that $B_{j}=0 $ for $j>1$ odd and that
		\begin{equation}
			\Li_j(e^{2\pi\I t}) + (-1)^j \Li_j (e^{-2\pi\I t}) = \begin{cases}- \frac{(2\pi\I)^j}{j!} B_{1,j}(t\,|\,1)\,, \quad \mbox{when $j\in\Z_{>0}$}\,,\\ 0\,,\qquad\qquad \qquad\quad\,\, \,\, \mbox{when $j\in\Z_{\le 0}$}\,.  \end{cases}
		\end{equation}
		The result to be proved then follows from the fact that the Euler characteristic of the resolved conifold is $2$.

	\end{proof}

	\begin{rem}
		As a corollary, we see that the equivariant Gromov--Witten potential of the resolved conifold with anti-diagonal action admits the following non-perturbative completions:
		\begin{equation}
			F^\pm_{\mathrm{ad},k}(\lambda,\boldsymbol t)=\frac{(2\pi)^3\I x^2t}{2\kappa^2\check{\lambda}^2}+\widehat F^\pm_{\np,k}(\lambda,t)\,.
		\end{equation}
		These aren't the only possibilities, but they are the ones best approximated by the given asymptotic expansion for the Gromov--Witten potential in the sense that they are, for their respective ranges of $t$, the unique holomorphic function in the region $\mathrm{Re}\,\lambda > 0$ such that we have the following bound on the error:
		\begin{equation}
			\left|F^\pm_{\mathrm{ad},k} (\lambda,\boldsymbol t) - \frac{(2\pi)^3\I x^2t}{2\kappa^2\check{\lambda}^2} -  \frac{\widetilde F^0(t)}{\lambda^2}-\sum_{g=2}^{n} \lambda^{2g-2} F^{g}(t)\right| < C(\boldsymbol t)^{n+1}n!|\lambda|^n\,,
		\end{equation}
		where $C(\boldsymbol t)$ is some positive function. In other words, they are obtained by Borel resummation of the  asymptotic series along the positive real line $\R_{>0}$ in the Borel plane. This was explicitly worked out in full detail in \cite{GWtoDT}. In particular, it was determined that the Stokes rays in the Borel plane are at $\pm l_k := \R_{<0}\cdot \pm\I(t+k)$ for all $k\in \Z$ and $\pm l_\infty:=\mathbb R_{<0}\cdot \pm\I$ with the associated Stokes jumps being 
		\begin{equation}
			\begin{split}
				\phi_{\pm l_k} &= \frac{\mathrm{sign}(\mathrm{Im}\,t)}{2\pi \I} \frac{\partial}{\partial \check{\lambda}}\left(\check{\lambda}\Li_2\bigg(\exp\bigg(\pm\frac{2\pi\I(t+k)}{\check{\lambda}}\bigg)\bigg)\right),\\
				\phi_{\pm l_\infty} &= \frac{1}{\pi \I} \sum_{k=1}^\infty\frac{\partial}{\partial \check{\lambda}}\left(\check{\lambda}\Li_2\bigg(\exp\bigg(\pm\frac{2\pi\I k}{\check{\lambda}}\bigg)\bigg)\right) - \frac{\pi \I}{12}\,.
			\end{split}
		\end{equation}
		Notice that for $-k<\mathrm{Re}\,t < -k + 1$ and $\mathrm{Im}\,t > 0$, the positive real line $\R_{>0}$ lies in the sector bounded by $l_{k-1}$ and $l_k$, so the Borel resummation in this sector corresponds to $F^+_{\mathrm{ad},k}(\lambda,t)$. Meanwhile,  for $-k<-\mathrm{Re}\,t < -k + 1$ and $-\mathrm{Im}\,t > 0$,  the positive real line $\R_{>0}$ lies in the sector bounded by $-l_{-k}$ and $-l_{-k+1}$, so the Borel resummation in this sector corresponds to  $F^-_{\mathrm{ad},k}(\lambda,t)$. The  Stokes jumps $\phi_{\pm l_k}$ can then be reproduced using the following identity \cite[p.\ 428]{BridgelandCon}:
		\begin{equation}
			\log G(z+ \omega_2\,|\,\omega_1,\omega_2) - \log G(z\,|\,\omega_1,\omega_2) = \frac{1}{2\pi\I}\frac{\partial}{\partial \omega_1}\left(\omega_1 \Li_2\bigg(\exp\bigg(\frac{2\pi\I z}{\omega_1}\bigg)\bigg)\right).
		\end{equation}
	\end{rem}

	%%%%%%%%%%%%%%%%%%%%%%%%%%%%%%%%%%%%%%%%%%%%
	
	%%%%%%%%%%%%%%%%%%%%%%%%%%%%%%%%%%%%%%%%%%%
	
	\section{The Ablowitz--Ladik hierarchy}\label{sec:AL}

	The AL hierarchy \cite{Ablowitz-Ladik} is a set of infinitely many non-linear differential-difference equations in two functions $a,b\colon \Z \rightarrow \C\llbracket z,\widetilde{z} \rrbracket$, where $z$ and $\widetilde{z}$ represent two countably infinite tuples of (complex) flow parameters $z_i$ and $\widetilde{z}_i$ respectively. The simplest non-trivial flow equation in the hierarchy is the AL system:
	\begin{equation}
		\begin{split}
			\I\dot a &= \phantom{+} (\Lambda(a) + \Lambda^{-1}(a))(1-ab) \,,\\
			\I\dot b &= -(\Lambda(b) + \Lambda^{-1}(b))(1-ab)\,.
		\end{split}
	\end{equation}
	Here the dot denotes derivative with respect to a linear combination of the flow parameters in the tuples $z$ and $\widetilde{z}$, while $\Lambda$ denotes the shift operator whose action on any function $u$ on $\Z$ is given by
	\begin{equation}
		(\Lambda (u))(n)= u(n+1)\,.
	\end{equation}
	One way of constructing the full AL hierarchy is by means of three tau functions $\sigma, \rho, \tau\colon \Z \rightarrow \C\llbracket z,\widetilde{z}\rrbracket$ associated to a solution $(a,b)$ of the AL system. These are \emph{defined} by
	\begin{equation}
		a = \frac{\sigma}{\tau}\,, \quad b = \frac{\rho}{\tau}\,, \quad 1 - ab = \frac{\Lambda(\tau)\Lambda^{-1}(\tau)}{\tau^2}\,.
	\end{equation}
	Note that the tau functions are not independent but satisfy
	\begin{equation}
		\Lambda(\tau)\Lambda^{-1}(\tau) = \tau^2 - \sigma \rho\,.
	\end{equation}
	The AL hierarchy is then equivalent to the following system of Hirota bilinear equations (cf.\ Vekslerchik \cite{Veks1,Veks2}):
	\begin{subequations}\label{eq:Hirota}
		\begin{align}
			\tau(z,\widetilde{z}) \tau(z + \I [\zeta], \widetilde{z}) - \rho(z,\widetilde{z}) \sigma(z + \I [\zeta], \widetilde{z}) &= \phantom{+\zeta}\Lambda^{-1}(\tau)(z,\widetilde{z}) \Lambda(\tau)(z+ \I [\zeta], \widetilde{z})\,,\label{eq:Ha}\\
			%%%%
			\tau(z,\widetilde{z}) \sigma(z + \I [\zeta], \widetilde{z}) - \sigma(z, \widetilde{z}) \tau(z + \I [\zeta], \widetilde{z}) &=\phantom{+}\zeta \Lambda^{-1}(\tau)(z,\widetilde{z}) \Lambda(\sigma)(z + \I [\zeta], \widetilde{z})\,,\label{eq:Hb}\\
			%%%%
			\rho(z,\widetilde{z}) \tau(z + \I [\zeta], \widetilde{z}) - \tau(z,\widetilde{z}) \rho(z + \I [\zeta], \widetilde{z}) &=\phantom{+}\zeta \Lambda^{-1}(\rho)(z, \widetilde{z}) \Lambda(\tau)(z + \I [\zeta], \widetilde{z})\,,\label{eq:Hc}\\
			%%%%%%%%
			\tau(z, \widetilde{z}) \tau(z,\widetilde{z} + \I [\zeta]) - \rho(z,\widetilde{z}) \sigma(z,\widetilde{z} + \I [\zeta]) &=\phantom{-\zeta} \Lambda(\tau)(z,\widetilde{z}) \Lambda^{-1}(\tau)(z,\widetilde{z} + \I [\zeta])\,,\label{eq:Hd}\\
			%%%%
			\tau(z,\widetilde{z}) \sigma(z,\widetilde{z}+ \I [\zeta]) - \sigma(z,\widetilde{z}) \tau(z,\widetilde{z} + \I [\zeta]) &=-\zeta \Lambda(\tau)(z,\widetilde{z}) \Lambda^{-1}(\sigma)(z,\widetilde{z} + \I [\zeta])\,,\label{eq:He}\\
			%%%%
			\rho(z,\widetilde{z}) \tau(z,\widetilde{z} + \I [\zeta]) - \tau(z,\widetilde{z}) \rho(z,\widetilde{z}+ \I [\zeta]) &=-\zeta \Lambda(\rho)(z,\widetilde{z}) \Lambda^{-1}(\tau)(z,\widetilde{z} + \I [\zeta])\,.\label{eq:Hf}
		\end{align}
	\end{subequations}
	In the above, $[\zeta]$ denotes the tuple
	\begin{equation}
		[\zeta] := \left(\zeta,\frac{\zeta^2}{2},\frac{\zeta^3}{3}, \ldots\right).
	\end{equation}
	The flows of the hierarchy are then obtained by expanding the above set of equations order by order in $\zeta$. In particular, the AL system is obtained by taking the flow along $\partial_{z_1} - \partial_{\widetilde{z}_1}$.
	
	\section{Dispersionless limit}\label{sec:DL}
	
	The dispersionless limit of any integrable hierarchy is a limit in which the contribution of higher-derivative terms (which cause dispersion) is suppressed. This is achieved by scaling the length and times (i.\ e.\ the flow parameters) by a factor $\check{\lambda}^{-1}$ inversely proportional to a dispersion parameter $\lambda$, and then taking the limit $\lambda \rightarrow 0$. Expanding the relevant flow equations as Taylor series in $\lambda$ about $\lambda = 0$ then gives the small-dispersion expansion of the hierarchy.
	
	The most straightforward way to do this in the case of the AL hierarchy is to work at the level of tau functions. That is to say, we introduce tau functions $\sigma_\lambda, \rho_\lambda, \tau_\lambda\colon \C \rightarrow \C\llbracket z,\widetilde{z}\rrbracket$ of a continuous complex variable $x$, parametrised by $\lambda$.  (It is not a coincidence that  the spatial coordinate is denoted with the same symbol as the group ring parameter associated to the identity class in $H^0(X, \C)$; we will later see that they can be identified.) The dependence of the tau functions on $\lambda$ is stipulated to take the following form:
	\begin{equation}\label{eq:lambda-dep}
		\begin{split}
			\log\sigma_\lambda = \frac{\varpi_\lambda}{\check{\lambda}^2} + \frac{ s_\lambda}{\check{\lambda}} +  r_\lambda\,, \quad \log\rho_\lambda = \frac{\varpi_\lambda}{\check{\lambda}^2} - \frac{ s_\lambda}{\check{\lambda}} +  r_\lambda\,, \quad \log \tau_\lambda &= \frac{\varpi_\lambda}{\check{\lambda}^2}\,. 
		\end{split}
	\end{equation}
	Here, $\varpi_\lambda, s_\lambda, r_\lambda$ are analytic in $\lambda$. So, they have well-defined limits $\varpi, s, r$ as $\lambda \rightarrow 0$.
	
	We also introduce the operator $\Lambda_\lambda:= \exp(\check{\lambda}\partial_x)$. 
	The small-dispersion expansion of the AL hierarchy is then given by making the following replacements in \eqref{eq:Hirota}:
	\begin{equation}\label{eq:replace}
		\sigma \mapsto 	\sigma_\lambda\,,\quad \rho \mapsto \rho_\lambda\,, \quad \tau \mapsto \tau_\lambda\,, \quad \Lambda\mapsto \Lambda_\lambda\,, \quad [\zeta] \mapsto \check{\lambda}[\zeta]\,.
	\end{equation}
	In the conjectured correspondence between GW theory and integrable hierarchies, the dispersion parameter $\lambda$ plays the role of the genus expansion parameter. In particular, the genus zero GW theory should correspond to the dispersionless limit of the associated hierarchy. In order to relate the dispersionless limit of the AL hierarchy to the resolved conifold, we will first identify a suitable set of dependent variables and describe the dynamical equations of the hierarchy in terms of them.
	\begin{prop}\label{prop:ALprop}
		The dispersionless limit of the AL hierarchy is given by the following system of equations in the functions $v= \partial_xs$ and $u=-\log(1-e^{2 r})$:
		\begin{subequations}\label{eq:dlAL}
			\begin{align}
				D^\zeta_zv
				&=\phantom{+}\I\,\frac{\partial}{\partial x}\log\left(\frac{1 - \zeta e^{v}+\sqrt{(1+\zeta e^{v})^2 - 4\zeta e^{v}e^{-u}}}{2}\right),\label{eq:dlALa}\\
				D^\zeta_zu &= -\I\,\frac{\partial}{\partial x}\log\left(\frac{1 + \zeta e^{v}+\sqrt{(1+\zeta e^{v})^2 - 4\zeta e^{v}e^{-u}}}{2}\right),
				\label{eq:dlALb}\\
				D^\zeta_{\widetilde{z}}v &= -\I\,\frac{\partial}{\partial x}\log\left(\frac{1 +\zeta e^{-v}+\sqrt{(1-\zeta e^{-v})^2 + 4\zeta e^{-v}e^{-u}}}{2}\right),\label{eq:dlALc}\\
				D^\zeta_{\widetilde{z}}u &=-\I\,\frac{\partial}{\partial x}\log\left(\frac{1 - \zeta e^{-v}+\sqrt{(1-\zeta e^{-v})^2 + 4\zeta e^{-v}e^{-u}}}{2}\right)\label{eq:dlALd},
			\end{align}
		\end{subequations}
		where $D^\zeta_z$ and $D^\zeta_{\widetilde{z}}$ denote the operators
		\begin{equation}
			D^\zeta_z = \sum_{j=1}^\infty\frac{\zeta^j}{j}\frac{\partial}{\partial z_j}\,, \quad D^\zeta_{\widetilde{z}} = \sum_{j=1}^\infty\frac{\zeta^j}{j}\frac{\partial}{\partial \widetilde{z}_j}\,.
		\end{equation}
	\end{prop}
	\begin{proof}
		Making the replacements \eqref{eq:replace} in \eqref{eq:Hirota} and dividing equations \eqref{eq:Ha} and \eqref{eq:Hb} by the respective first terms on the left-hand side, we get
		\begin{subequations}
			\begin{align}
				&1 - \left(\frac{\rho_\lambda}{\tau_\lambda}\right)(z,\widetilde{z})\left(\frac{\sigma_\lambda}{\tau_\lambda}\right)(z + \I\check{\lambda}[\zeta],\widetilde{z})
				\nonumber\\ &\quad=\left(\frac{\Lambda^{-1}_\lambda(\tau_\lambda)}{\tau_\lambda}\right)(z,\widetilde{z})\left(\frac{\Lambda_\lambda(\tau_\lambda)}{\tau_\lambda}\right)(z + \I\check{\lambda}[\zeta],\widetilde{z})\,,\\
				&1 - \left(\frac{(\sigma_\lambda/\tau_\lambda)(z + \I\check{\lambda}[\zeta],\widetilde{z})}{(\sigma_\lambda/\tau_\lambda)(z,\widetilde{z})}\right)^{-1}\nonumber\\
				&\quad = \zeta\left(\frac{\Lambda^{-1}_\lambda(\tau_\lambda)}{\tau_\lambda}\right)(z,\widetilde{z})\left(\frac{\Lambda_\lambda(\tau_\lambda)}{\tau_\lambda}\right)(z + \I\check{\lambda}[\zeta],\widetilde{z})\left(\frac{\Lambda_\lambda(\sigma_\lambda/\tau_\lambda)}{\sigma_\lambda/\tau_\lambda}\right)(z + \I\check{\lambda}[\zeta],\widetilde{z})\,.
				%&1 -
				%\left(\frac{(\rho_\lambda/\tau_\lambda)(z + \I\check{\lambda}[\zeta],\widetilde{z})}{(\rho_\lambda/\tau_\lambda)(z,\widetilde{z})}\right)\nonumber\\
				%&\quad = \zeta\left(\frac{\Lambda^{-1}_\lambda(\tau_\lambda)}{\tau_\lambda}\right)(z,\widetilde{z})\left(\frac{\Lambda_\lambda(\tau_\lambda)}{\tau_\lambda}\right)(z + \I\check{\lambda}[\zeta],\widetilde{z})\left(\frac{\Lambda^{-1}_\lambda(\rho_\lambda/\tau_\lambda)}{\rho_\lambda/\tau_\lambda}\right)(z,\widetilde{z}).
			\end{align}
		\end{subequations}
		Substituting \eqref{eq:lambda-dep} into the above then gives us
		\begin{subequations}
			\begin{align}
				&1 - \exp\left(\frac{s_\lambda(z + \I\check{\lambda}[\zeta],\widetilde{z}) - s_\lambda(z,\widetilde{z}))}{\check{\lambda}} \right)\exp\left( r_\lambda(z + \I\check{\lambda}[\zeta],\widetilde{z})+r_\lambda(z,\widetilde{z})\right)
				\nonumber\\ &\quad=\exp\left(\frac{(\Lambda_\lambda^{-1}-1)(\varpi_\lambda)(z ,\widetilde{z}) +(\Lambda_\lambda-1)(\varpi_\lambda)(z + \I\check{\lambda}[\zeta],\widetilde{z})}{\check{\lambda}^2}\right),\label{eq:sub-a}\\
				&1 - \exp\left(-\frac{s_\lambda(z + \I\check{\lambda}[\zeta],\widetilde{z}) - s_\lambda(z,\widetilde{z}))}{\check{\lambda}} \right)\exp\left(-r_\lambda(z + \I\check{\lambda}[\zeta],\widetilde{z})+r_\lambda(z,\widetilde{z}))\right)\nonumber\\
				&\quad = \zeta\exp\left(\frac{(\Lambda_\lambda^{-1}-1)(\varpi_\lambda)(z ,\widetilde{z}) +(\Lambda_\lambda-1)(\varpi_\lambda)(z + \I\check{\lambda}[\zeta],\widetilde{z})}{\check{\lambda}^2}\right)\nonumber\\
				&\quad\quad\times\exp\left((\Lambda_\lambda -1)\left(\frac{s_\lambda(z + \I\check{\lambda}[\zeta],\widetilde{z})}{\check{\lambda}} + r_\lambda(z + \I\check{\lambda}[\zeta],\widetilde{z})\right)\right).\label{eq:sub-b}
				%&1 -
				%\exp\left(-\frac{s_\lambda(z + \I\check{\lambda}[\zeta],\widetilde{z}) - s_\lambda(z,\widetilde{z}))}{\check{\lambda}} \right)\exp\left(\pi\I (t_\lambda(z + \I\check{\lambda}[\zeta],\widetilde{z})-t_\lambda(z,\widetilde{z}))\right)\nonumber\\
				%&\quad = \zeta\exp\left(\frac{(\Lambda_\lambda^{-1}-1)(\varpi_\lambda)(z ,\widetilde{z}) +(\Lambda_\lambda-1)(\varpi_\lambda)(z + \I\check{\lambda}[\zeta],\widetilde{z})}{\check{\lambda}^2}\right)\nonumber\\
				%&\quad\quad\times\exp\left((\Lambda^{-1}_\lambda -1)\left(-\frac{s_\lambda(z,\widetilde{z})}{\check{\lambda}} +\pi\I t_\lambda(z,\widetilde{z})\right)\right).\label{eq:sub-c}
			\end{align}
		\end{subequations}
		For convenience, we now introduce the following notation:
		\begin{equation}
			\begin{split}
				X(z,\widetilde{z})&=\frac{s_\lambda(z + \I\check{\lambda}[\zeta],\widetilde{z}) - s_\lambda(z,\widetilde{z}))}{\check{\lambda}}\,,\\
				Y(z,\widetilde{z})&= \frac{ r_\lambda(z + \I\check{\lambda}[\zeta],\widetilde{z})-r_\lambda(z,\widetilde{z})}{\check{\lambda}}\,,\\
				Z(z,\widetilde{z}) &=(\Lambda_\lambda -1)\left(\frac{s_\lambda(z + \I\check{\lambda}[\zeta],\widetilde{z})}{\check{\lambda}} +r_\lambda(z + \I\check{\lambda}[\zeta],\widetilde{z})\right).
			\end{split}
		\end{equation}
		Eliminating $\varpi_\lambda$ from \eqref{eq:sub-a} and \eqref{eq:sub-b} gives us a quadratic equation in $e^{X(z,\widetilde{z})}$:
		\begin{equation}
			\zeta e^{Z(z,\widetilde{z})}e^{\check{\lambda}Y(z,\widetilde{z})}e^{2r_\lambda(z,\widetilde{z})}e^{2X(z,\widetilde{z})} + \left(1-\zeta e^{Z(z,\widetilde{z})}\right)e^{X(z,\widetilde{z})} - e^{-\check{\lambda}Y(z,\widetilde{z})}=0\,.
		\end{equation}
		This has two solutions but only one such that $X(z,\widetilde{z})$ has a well-defined limit as $\zeta \rightarrow 0$:
		\begin{equation}
			X(z,\widetilde{z}) = \log\left(\frac{-1 + \zeta e^{Z(z,\widetilde{z})}+\sqrt{(1-\zeta e^{Z(z,\widetilde{z})})^2 + 4\zeta e^{Z(z,\widetilde{z})}e^{2r_\lambda(z,\widetilde{z})}}}{2\zeta e^{Z(z,\widetilde{z})}e^{\check{\lambda}Y(z,\widetilde{z})}e^{2r_\lambda(z,\widetilde{z})}}\right).
		\end{equation}
		In the dispersionless limit $\lambda \rightarrow 0$, this becomes
		\begin{equation}\label{eq:Dzs}
			\I D^\zeta_zs = \log\left(\frac{-1 + \zeta e^{v}+\sqrt{(1-\zeta e^{v})^2 + 4\zeta e^{v}e^{2r}}}{2\zeta e^{v}e^{2r}}\right).
		\end{equation}
		Differentiating with respect to $x$ on both sides and substituting $e^{2r}=1-e^{-u}$ then gives us
		\begin{equation}
			D^\zeta_zv = -\I\,\frac{\partial}{\partial x}\log\left(\frac{-1 + \zeta e^{v}+\sqrt{(1+\zeta e^{v})^2 - 4\zeta e^{v}e^{-u}}}{2\zeta e^{v}(1-e^{-u})}\right).
		\end{equation}
		Equation \eqref{eq:dlALa} then follows from the identity
		\begin{equation}
			\frac{-1 + \zeta e^{v}+\sqrt{(1+\zeta e^{v})^2 - 4\zeta e^{v}e^{-u}}}{2\zeta e^{v}(1-e^{-u})} = \frac{2}{1 - \zeta e^{v}+\sqrt{(1+\zeta e^{v})^2 - 4\zeta e^{v}e^{-u}}}\,.
		\end{equation}
		We will now use what we have established so far to prove \eqref{eq:dlALb}. First, we note that \eqref{eq:sub-a} in the limit $\lambda \rightarrow 0$ becomes
		\begin{equation}\label{eq:varpi1}
			1- \exp(\I D^\zeta_z s)e^{2r} = \exp\left(\frac{\partial^2\varpi}{\partial x^2} + \I\frac{\partial D^\zeta_z\varpi}{\partial x}\right).
		\end{equation}
		Setting $\zeta = 0$ then gives us
		\begin{equation}\label{eq:varpi2}
			u=-\log(1-e^{2r})=-\frac{\partial^2\varpi}{\partial x^2}\,.
		\end{equation}
		Substituting this back into \eqref{eq:varpi1}, we obtain
		\begin{equation}
			\frac{\partial D^\zeta_z\varpi}{\partial x} = -\I\log\left(\frac{1- \exp(\I D^\zeta_z s)e^{2r}}{e^{-u}}\right).
		\end{equation}
		Differentiating with respect to $x$ on both sides and substituting \eqref{eq:Dzs} and $e^{2r}=1-e^{-u}$ then gives
		\begin{equation}
			D^\zeta_zu = \I\,\frac{\partial}{\partial x}\log\left(\frac{1 + \zeta e^{v}-\sqrt{(1+\zeta e^{v})^2 - 4\zeta e^{v}e^{-u}}}{2\zeta e^{v}e^{-u}}\right).
		\end{equation}
		Equation \eqref{eq:dlALb} then follows from the identity
		\begin{equation}
			\frac{1 + \zeta e^{v}-\sqrt{(1+\zeta e^{v})^2 - 4\zeta e^{v}e^{-u}}}{2\zeta e^{v}e^{-u}}=\frac{2}{1 + \zeta e^{v}+\sqrt{(1+\zeta e^{v})^2 - 4\zeta e^{v}e^{-u}}}\,.
		\end{equation}
		A similar argument involving \eqref{eq:Hd} and \eqref{eq:He} leads to \eqref{eq:dlALc} and \eqref{eq:dlALd}.
	\end{proof}
	
	\begin{rem}\label{rem:xdif}
		By setting $\zeta = 0$ in \eqref{eq:sub-a} and taking the logarithm of both sides, we obtain a difference equation involving shifts in $x$:
		\begin{equation}
			\log(1-\exp(2r_\lambda)) = \frac{\Lambda_\lambda(\varpi_\lambda)-2\varpi_\lambda + \Lambda_\lambda^{-1}(\varpi_\lambda)}{\check{\lambda}^2}\,.
		\end{equation}
		Interestingly, this becomes  the difference equation \eqref{eq:diff-f} under the following replacements:
		\begin{equation}
			x\mapsto t\,, \quad \exp(2r_\lambda) \mapsto Q\,.
		\end{equation}
	\end{rem}
	
	\begin{rem}
		We would like to thank the referee for pointing out to us that the expressions appearing in Prop. \ref{prop:ALprop}, especially the r.~h.~s.~of \eqref{eq:Dzs} coincide with one of the branches of the superpotential which defines the Hori--Iqbal--Vafa \cite{HV,HIV} mirror curve of the resolved conifold. While we do not have an explanation for this observation, we think that it may provide a link between the integrable hierarchies attached to topological string theories in \cite{ADKMV} which have an interpretation in terms of free fermions and open topological string theory and the integrable hierarchy associated to Gromov--Witten theory which corresponds to a closed topological string theory.
		
	\end{rem}
	
	\begin{rem}
		By applying the operator $(\zeta \partial_\zeta)^2$ to \eqref{eq:dlAL} we get the following alternative form for the dispersionless AL hierarchy:
		\begin{subequations}
			\begin{align}%\label{eq:dlAL2}
				\Delta^\zeta_zv
				&=-\I\,\frac{\partial}{\partial x}\left(\frac{(1 - \zeta e^{v})\zeta e^{v}e^{-u}}{\sqrt{((1+\zeta e^{v})^2 - 4\zeta e^{v}e^{-u})^3}}\right)\nonumber\\
				&=\phantom{+}\I\,\frac{\partial^2}{\partial x \partial u}\left(\frac{1 - \zeta e^{v}-\sqrt{(1+\zeta e^{v})^2 - 4\zeta e^{v}e^{-u}}}{2\sqrt{(1+\zeta e^{v})^2 - 4\zeta e^{v}e^{-u}}}\right),\label{eq:dlALa2}\\
				%%%%
				\Delta^\zeta_zu &= \phantom{+}\I\,\frac{\partial}{\partial x}\left(\frac{(1-e^u)(1 + \zeta e^{v})\zeta e^{v}e^{-u}}{\sqrt{((1+\zeta e^{v})^2 - 4\zeta e^{v}e^{-u})^3}}\right)\nonumber\\
				&=\phantom{+}\I\,\frac{\partial^2}{\partial x \partial v}\left(\frac{1 - \zeta e^{v}-\sqrt{(1+\zeta e^{v})^2 - 4\zeta e^{v}e^{-u}}}{2\sqrt{(1+\zeta e^{v})^2 - 4\zeta e^{v}e^{-u}}}\right),
				\label{eq:dlALb2}\\
				%%%%
				\Delta^\zeta_{\widetilde{z}}v &=-\I\,\frac{\partial}{\partial x}\left(\frac{(1 + \zeta e^{-v})\zeta e^{-v}e^{-u}}{\sqrt{((1-\zeta e^{-v})^2 + 4\zeta e^{-v}e^{-u})^3}}\right)\nonumber\\
				&=-\I\,\frac{\partial^2}{\partial x \partial u}\left(\frac{1 + \zeta e^{-v}-\sqrt{(1-\zeta e^{-v})^2 + 4\zeta e^{-v}e^{-u}}}{2\sqrt{(1-\zeta e^{-v})^2 + 4\zeta e^{-v}e^{-u}}}\right),\label{eq:dlALc2}\\
				%%%%
				\Delta^\zeta_{\widetilde{z}}u &=-\I\,\frac{\partial}{\partial x}\left(\frac{(1-e^u)(1 - \zeta e^{-v})\zeta e^{-v}e^{-u}}{\sqrt{((1-\zeta e^{-v})^2 + 4\zeta e^{-v}e^{-u})^3}}\right)\nonumber\\
				&=-\I\,\frac{\partial^2}{\partial x \partial v}\left(\frac{1 + \zeta e^{-v}-\sqrt{(1-\zeta e^{-v})^2 + 4\zeta e^{-v}e^{-u}}}{2\sqrt{(1-\zeta e^{-v})^2 + 4\zeta e^{-v}e^{-u}}}\right)\label{eq:dlALd2},
			\end{align}
		\end{subequations}
		where $\Delta^\zeta_z$ and $\Delta^\zeta_{\widetilde{z}}$ denote the operators
		\begin{equation}
			\Delta^\zeta_z = \sum_{j=1}^\infty j\zeta^j\frac{\partial}{\partial z_j}\,, \quad \Delta^\zeta_{\widetilde{z}} = \sum_{j=1}^\infty j\zeta^j\frac{\partial}{\partial \widetilde{z}_j}\,.
		\end{equation}
		In particular, this is a dispersionless \emph{Hamiltonian} hierarchy with generating functions of Hamiltonian densities
		\begin{equation}\label{eq:hamd}
			\begin{split}
				h(\zeta) &= \phantom{+}\I\left(\frac{1 - \zeta e^{v}-\sqrt{(1+\zeta e^{v})^2 - 4\zeta e^{v}e^{-u}}}{\sqrt{(1+\zeta e^{v})^2 - 4\zeta e^{v}e^{-u}}}\right),\\
				\widetilde h(\zeta) &=-\I\left(\frac{1 + \zeta e^{-v}-\sqrt{(1-\zeta e^{-v})^2 + 4\zeta e^{-v}e^{-u}}}{\sqrt{(1-\zeta e^{-v})^2 + 4\zeta e^{-v}e^{-u}}}\right),
			\end{split}
		\end{equation}
		and the following Poisson brackets:
		\begin{equation}\label{eq:Poisson}
			\{u(x),u(y)\} = \{v(x),v(y)\} = 0\,, \quad \{u(x),v(y)\} = \{v(x),u(y)\} = \delta'(x-y)\,.
		\end{equation}
	\end{rem}
	
	There are a number of interesting aspects of the Hamiltonian formulation of the dispersionless limit of the AL hierarchy. Firstly, the Hamiltonian densities depend only on $u$ and $v$ explicitly and not on their spatial derivatives. Secondly, the coefficients of $\delta'(x)$ in the Poisson brackets form a constant symmetric matrix. A Hamiltonian hierarchy that has this structure (possibly after a suitable change of dependent variables) is said to be of \emph{hydrodynamic type}. Such hierarchies were first defined and studied by Dubrovin and Novikov \cite{DN84}. 
	
	A key fact concerning Hamiltonian hierarchies of hydrodynamic type is that there is a natural such hierarchy called the \emph{principal hierarchy} associated to any (almost) Frobenius manifold, which conversely may be viewed as a differential-geometric structure on the initial data of such a hierarchy. The small phase space $H^\bullet(X,\C)$ of a variety with vanishing odd cohomologies happens to carry the structure of a(n almost) Frobenius manifold, so this gives a rather general construction of a dispersionless integrable hierarchy associated to the (equivariant) genus zero GW theory of any such variety. It was noted by Dubrovin and Brini in \cite{Dubrovin,Brini} that the dispersionless limit of the AL hierarchy is contained within the principal hierarchy associated to the equivariant GW theory of the resolved conifold with anti-diagonal action (hereafter referred to as simply the principal hierarchy of the resolved conifold). We will now present a proof of this result in our notation to ensure that the dependent variables $u$ and $v$ that we have introduced are indeed the variables of the principal hierarchy identified by Dubrovin.

	\begin{prop}\label{prop:principal}
		The dispersionless limit of AL hierarchy with the initial conditions $v=2\pi\I x/\kappa$ and $u=-2\pi\I t$
		is contained in the principal hierarchy of the resolved conifold with the following identification on the small phase space: 
		\begin{equation}\label{eq:id}
			\left(\frac{2\pi}{\kappa}\right)^2\varpi|_{(z,\widetilde z)=(0,0)} = \left.\left(\frac{uv^2}{2} + \widetilde F^0\right)\right|_{(z,\widetilde z)=(0,0)}.
		\end{equation}
	\end{prop}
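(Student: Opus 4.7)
The plan is to verify directly that the dispersionless AL hierarchy, written in the Hamiltonian form of the preceding remark, coincides with the principal hierarchy of the resolved conifold's equivariant Frobenius manifold under the given change of variables, and then to deduce the identification \eqref{eq:id} as a consequence.

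I would begin by recording the Frobenius manifold data. The small phase space is coordinatised by the flat coordinates $(x,t)$, in which the Poincar\'e pairing $\eta$ is off-diagonal with $x$ serving as the unit, and the structure constants take the form $c_{\alpha\beta}^\gamma = \eta^{\gamma\delta}\partial_\alpha\partial_\beta\partial_\delta F^0_{\mathrm{ad}}$. The principal hierarchy's Hamiltonian densities are the ``deformed flat coordinates'' $\theta_\alpha(x,t;\zeta)$, determined (up to a normalisation choice) by the Dubrovin flatness equations
\begin{equation*}
\partial_\alpha\partial_\beta\theta_\gamma = \zeta\,c_{\alpha\beta}^\delta\,\partial_\delta\theta_\gamma\,.
\end{equation*}
I would then pull back $h(\zeta)$ and $\tilde h(\zeta)$ from \eqref{eq:hamd} through the change of variables $v = 2\pi\I x/\kappa$, $u = -2\pi\I t$, and verify that the resulting functions solve the Dubrovin flatness equations for the resolved conifold's Frobenius manifold. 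This identifies them as the generators of the principal hierarchy, so the dispersionless AL flows \eqref{eq:dlAL} are contained in the principal hierarchy.

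For the identification \eqref{eq:id}, I would invoke the relation $u = -\partial_x^2\varpi$ from \eqref{eq:varpi2}. Substituting the initial condition $u = -2\pi\I t$ and integrating twice in $x$ gives $\varpi|_{(z,\tilde z)=(0,0)} = \pi\I tx^2 + \alpha(t)x + \beta(t)$, while the linear ansatz $v = 2\pi\I x/\kappa$ ensures that the quadratic piece is reproduced precisely by $-uv^2/2$ on the right-hand side of \eqref{eq:id}. The residual functions $\alpha(t)$ and $\beta(t)$ are then pinned down by evaluating the first $z$- and $\tilde z$-flow derivatives of $\varpi$ at $(z,\tilde z)=(0,0)$ via \eqref{eq:dlAL} and matching them against the corresponding Hamiltonian densities of the principal hierarchy expressed in terms of $\tilde F^0(t) = \mathrm{Li}_3(e^{2\pi\I t})$. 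Assembling the terms reproduces $(2\pi)^2\varpi = -uv^2/2 + \tilde F^0$ on the small phase space.

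The main obstacle, I expect, will be verifying the Dubrovin flatness equations for $h(\zeta)$ and $\tilde h(\zeta)$. The square roots and the $\tanh^{-1}$ in \eqref{eq:hamd} make a direct substitution algebraically cumbersome and branch-sensitive. A more economical route, which I would attempt in parallel, is to pass to canonical coordinates on the semisimple locus of the Frobenius manifold: in those coordinates the flatness equations decouple into first-order ODEs whose solutions recognisably match \eqref{eq:hamd}. The translation between flat and canonical coordinates involves the polylogarithmic structure of $\tilde F^0$ and is the most delicate step of the computation.
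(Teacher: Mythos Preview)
Your plan has a genuine gap in the first step. The generating functions $h(\zeta)$ and $\tilde h(\zeta)$ of \eqref{eq:hamd} are \emph{not} deformed flat coordinates, and so do not satisfy the Dubrovin flatness equations $\partial_\alpha\partial_\beta g = \xi\, c_{\alpha\beta}^\delta\,\partial_\delta g$ with your generating parameter $\zeta$ in the role of the spectral parameter $\xi$. Taking $\alpha=\beta$ to be the unit direction gives $\partial_v^2 g = \xi\,\partial_v g$, which forces $g = A(\xi,u)\,e^{\xi v} + B(\xi)$; but $h(\zeta)$ depends on $v$ only through the combination $\zeta e^v$, which is not of this form. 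The actual relation between $h,\tilde h$ and the deformed flat coordinates $g_1(\xi),g_2(\xi)$ is a Mellin transform in $\zeta$ (see Remark~\ref{rem:Mellin}), so neither a direct substitution nor a passage to canonical coordinates will yield the flatness equations for $h(\zeta)$ itself. The difficulty you anticipate is structural, not merely algebraic.

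The paper circumvents this by eliminating $\xi$ from the flatness system \eqref{eq:WDVV}: combining the first and third equations produces the single linear PDE $\partial_u^2 g = f'''(u)\,\partial_v^2 g$ (equation \eqref{eq:density-constraint}), which, being linear and free of $\xi$, is automatically satisfied by any generating function of principal-hierarchy densities. Substituting $h$ and $\tilde h$ into this constraint then \emph{determines} $f'''(u) = (e^u - 1)^{-1}$, hence $f(u) = -\tilde F^0$, which identifies the Frobenius potential with that of the resolved conifold without ever matching $h,\tilde h$ against $g_1,g_2$ directly. For the identification \eqref{eq:id}, your use of $u = -\partial_x^2\varpi$ from \eqref{eq:varpi2} together with the initial condition on $v$ is exactly the paper's argument.
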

	
	\begin{proof}
		The principal hierarchy associated to a(n almost) Frobenius manifold $V$ is defined on the formal loop space $L(S^1,V)$, whose elements are maps from $S^1=\R/\Z$ to $V$. In our case, $V$ is the small phase space $H^\bullet(X,\mathbb{Z})$, while $w^1:=v$ and $w^2:=u$ can be interpreted as maps from $S^1$ to $V$. (Note that the map $w^1$ is distinguished.) Meanwhile, $x$ is interpreted as a coordinate on the universal cover $\R$ of $S^1$.
		
		A potential $\Phi$ on $V$ encodes a metric $\eta$ and an associative product $\star$ with identity $\partial_{w^1}$ on the tangent bundle $TV$:
		\begin{equation}
			\begin{split}
				\eta_{\mu\nu}:=\eta\left(\frac{\partial}{\partial w^\mu}\,,\frac{\partial}{\partial w^\nu}\right) &= \frac{\partial^3 \Phi}{\partial w^1\partial w^\mu\partial w^\nu}\,, \\ \eta\left(\frac{\partial}{\partial w^\kappa}\,,\frac{\partial}{\partial w^\mu}\star\frac{\partial}{\partial w^\nu}\right) &= \frac{\partial^3\Phi}{\partial w^\kappa\partial w^\mu\partial w^\nu}\,.
			\end{split}
		\end{equation}
		We also have a Poisson bracket on $L(S^1,V)$ in terms of the the inverse metric $\eta^{\mu\nu}$:
		\begin{equation}\label{eq:Poisson2}
			\{w^\mu(x), w^\nu(y)\} = \eta^{\mu\nu}\delta'(x-y) - \eta^{\mu\kappa}\Gamma^\nu_{\kappa\gamma}\frac{\partial w^\gamma}{\partial x}\delta(x-y)\,,
		\end{equation}
		where $\Gamma^\nu_{\kappa\gamma}$ are the Christoffel symbols of the metric $\eta$ and repeated indices are summed over. Comparing this with \eqref{eq:Poisson}, we immediately obtain
		\begin{equation}
			\eta_{11} = \eta_{22} = 0, \quad \eta_{12} = \eta_{21}=1\,.
		\end{equation}
		Thus, the most general form that the potential $\Phi$ can take is
		\begin{equation}
			\Phi = \frac{uv^2}{2} + f(u)\,.
		\end{equation}
		A consequence of the associativity of the product $\star$ is the existence of a pencil of flat torsion-free connections $\nabla^\xi$ parametrised by a parameter $\xi \in \C$:
		\begin{equation}
			\nabla^\xi_{\partial/\partial w^\mu}\frac{\partial}{\partial w^\nu} = \xi\,\frac{\partial}{\partial w^\mu}\star \frac{\partial}{\partial w^\nu}\,.
		\end{equation}
		As these connections are flat, they admit a local basis of parallel $1$-forms $\alpha_1(\xi)$ and $\alpha_2(\xi)$ analytic in $\xi$. And as these connections are torsion-free, the anti-symmetric parts of $\nabla^\xi\alpha_1$ and $\nabla^\xi \alpha_2$ are $\mathrm d\alpha_1$ and $\mathrm d\alpha_2$ (where $\mathrm d$ is the exterior derivative on $V$). Thus, the $1$-forms $\alpha_1(\xi)$ and $\alpha_2(\xi)$ are closed, and so by Poincaré's lemma, we can locally find functions $g_1(\xi)$ and $g_2(\xi)$ analytic in $\xi$ such that we have
		\begin{equation}
			\alpha_1 = \mathrm d g_1\,, \quad  \alpha_2 = \mathrm d g_2\,.
		\end{equation}
		More explicitly, the functions $g_1$ and $g_2$ are solutions to the following system of equations:
		\begin{equation}\label{eq:WDVV}
			\frac{\partial^2g}{\partial v^2} = \xi\, \frac{\partial g}{\partial v}\,, \quad \frac{\partial^2g}{\partial v\,\partial u} = \xi\, \frac{\partial g}{\partial u}\,, \quad \frac{\partial^2 g}{\partial u^2}= \xi f'''(u)\, \frac{\partial g}{\partial v}\,.
		\end{equation}
		The functions $g_1(\xi)$ and $g_2(\xi)$ are generating functions of Hamiltonian densities, which generate the principal hierarchy via the Poisson bracket \eqref{eq:Poisson2}. The generating functions $h(\zeta)$ and $\widetilde h(\zeta)$ are related to these in a non-trivial way (see Remark \ref{rem:Mellin}). We can get around the need to obtain an explicit relationship between them by eliminating $\xi$ from the first and third equations in \eqref{eq:WDVV}.  This gives us the following equation that by virtue of its linearity is valid for any generating function of Hamiltonian densities:
		\begin{equation}\label{eq:density-constraint}
			\frac{\partial^2g}{\partial u^2} = f'''(u)\,\frac{\partial^2g}{\partial v^2}\,.
		\end{equation}
		Substituting $g=h$ and $g=\widetilde h$ as in \eqref{eq:hamd} into the above, we obtain
		\begin{equation}
			f'''(u) = \frac{1}{1-e^{u}}=-\Li_0(e^{-u})\,.
		\end{equation}
		This is solved by $f(u) = \widetilde F^0(t)=\Li_3(e^{2\pi \I t})$ for $u=-2\pi\I t$. So the dispersionless AL hierarchy is contained in the principal hierarchy of the resolved conifold. The identification \eqref{eq:id} meanwhile follows from \eqref{eq:varpi2} and the initial condition $v=2\pi\I x/\kappa$.
	\end{proof}
	
	The exponential $e^\Phi$ of the potential $\Phi$ of the (almost) Frobenius manifold will be henceforth called the \emph{topological tau function} of the principal hierarchy.
	
	\begin{rem}\label{rem:Mellin}
		The first two equations of \eqref{eq:WDVV} imply that any solution $g$ necessarily has the form
		\begin{equation}
			g = A(\xi,u)e^{\xi v} + B(\xi)\,,
		\end{equation}
		where $A$ is a function independent of $v$ while $B$ is a function independent of $v$ and $u$. Since we are only interested in the flows of the Hamiltonians, we can without loss of generality set $B$ to be identically zero. Conversely, any function $g = A(\xi,u)e^{\xi v}$ which also satisfies \eqref{eq:density-constraint} is a solution of \eqref{eq:WDVV}. Now consider the following Mellin transforms, which are well-defined for $u>0$:
		\begin{subequations}
			\begin{align}
				\int_0^{\infty} h \zeta^{ -1-\xi}\dif\zeta &= \phantom{+}\I e^{\xi v}\int_0^{\infty} \left(\frac{1 -\zeta -\sqrt{(1+\zeta )^2 - 4\zeta e^{-u}}}{2\sqrt{(1+\zeta )^2 - 4\zeta e^{-u}}}\right)\zeta^{-1-\xi}\dif\zeta\,,\label{eq:Mela}\\
				%&= \phantom{+}\I e^{\xi v}\int_0^{\infty} \left(\frac{1 -\zeta^{-1} -\sqrt{(1+\zeta^{-1} )^2 - 4\zeta^{-1} e^{-u}}}{2\sqrt{(1+\zeta^{-1} )^2 - 4\zeta^{-1} e^{-u}}}\right)\zeta^{1+\xi}\frac{\dif\zeta}{\zeta^2}\\
				%&=\phantom{+}\I e^{\xi v}\int_0^{\infty} \left(\frac{\zeta -1 -\sqrt{(\zeta +1 )^2 - 4\zeta e^{-u}}}{2\sqrt{(\zeta +1)^2 - 4\zeta e^{-u}}}\right)\zeta^{-1+\xi}\dif\zeta\,,\\
				\int_0^{\infty} \widetilde h \zeta^{-1+\xi}\dif\zeta &=  -\I e^{\xi v}\int_0^{\infty}\left(\frac{1 +\zeta -\sqrt{(1-\zeta )^2 + 4\zeta e^{-u}}}{2\sqrt{(1-\zeta )^2 + 4\zeta e^{-u}}}\right)\zeta^{-1+\xi}\dif\zeta\,.\label{eq:Melb}
			\end{align}
		\end{subequations}
		These are of the form $A(\xi,u)e^{\xi v}$. Moreover, being integral transforms of $h$ and $\widetilde h$, the resulting functions satisfy \eqref{eq:density-constraint} with $f(u) = \Li_3(e^{-u})$ as well. Hence, it must be a solution $g$ of \eqref{eq:WDVV}. In other words, it is some linear combination of the Hamiltonian density generating functions $g_1$ and $g_2$ of the principal hierarchy of the resolved conifold, with coefficients depending only on $\xi$.

		In fact, we can see this more explicitly. Upon carrying out the following substitution (assuming $u>0$):
		\begin{equation}
			\zeta = \frac{1-y}{y(1-e^{-u}y)}\,, \quad y =\frac{1+\zeta - \sqrt{(1+\zeta)^2 - 4\zeta e^{-u}}}{2\zeta e^{-u}}\,,
		\end{equation}
		we see that the integral in \eqref{eq:Mela} becomes
		\begin{equation}
			\begin{split}
				&\I e^{\xi v}\int_0^{\infty} \left(\frac{1 -\zeta -\sqrt{(1+\zeta )^2 - 4\zeta e^{-u}}}{2\sqrt{(1+\zeta )^2 - 4\zeta e^{-u}}}\right)\zeta^{-1-\xi}\dif\zeta\\
				&= \I e^{\xi v}\int_0^{1} \frac{y^{\xi-1}(1-y)^{-\xi}}{(1-e^{-u} y)^{-\xi}}\mathrm dy\\
				&=\frac{\I \pi  e^{\xi v}}{\sin(\pi\xi)} {}_2 F_1(-\xi,\xi;1;e^{-u})\,, \quad \mbox{when $0 < \mathrm{Re}\,\xi < 1$.}
			\end{split}
		\end{equation}
		Here, ${}_2 F_1$ denotes the hypergeometric function.
		
		Meanwhile, upon carrying out the following substitution (again assuming $u>0$):
		\begin{equation}
			\zeta = \frac{y(1-(1-e^{-u})y)}{1-y}\,, \quad y =\frac{2\zeta}{ 1+\zeta + \sqrt{(1-\zeta)^2 + 4\zeta e^{-u}}}\,,
		\end{equation}
		we see that the integral in \eqref{eq:Melb} becomes
		\begin{equation}
			\begin{split}
				&-\I e^{\xi v}\int_0^{\infty}\left(\frac{1 +\zeta -\sqrt{(1-\zeta )^2 + 4\zeta e^{-u}}}{2\sqrt{(1-\zeta )^2 + 4\zeta e^{-u}}}\right)\zeta^{-1+\xi}\dif\zeta\\
				&= -\I e^{\xi v}(1-e^{-u})\int_0^{1} \frac{y^{\xi}(1-y)^{-\xi}}{(1-(1-e^{-u}) y)^{-\xi+1}}\mathrm dy\\
				&=-\frac{\I \pi \xi e^{\xi v}}{\sin(\pi\xi)}(1-e^{-u}) {}_2 F_1(-\xi+1,\xi+1;2;1-e^{-u})\,, \quad \mbox{when $-1 < \mathrm{Re}\,\xi < 1$.}
			\end{split}
		\end{equation}
		These can be directly verified to be linearly independent solutions of \eqref{eq:WDVV} with $f(u) = \Li_3(e^{-u})$; see, for instance, \cite[Prop.\ 2.5]{Brini}.
	\end{rem}

	%%%%%%%%%%%%%%%%%%%%%%%%%%%%%%%%%%%%%%%%%%%
	\section{Identification of the Tau function}\label{sec:Tau}
	In this section, we will extend the genus zero identification of the dispersionless AL tau function with the topological tau function of the principal hierarchy of the resolved conifold to an all-genera small-phase-space identification of the tau function of the small-dispersion expansion of the AL hierarchy with a suitably normalised version of Bridgeland's Tau function. The genus $2$ truncation of this identification was essentially the content of Theorem 1.4 in \cite{Brini} and it was proved by analyzing dispersive perturbations of the principal hierarchy. This is a computationally challenging task, however the control over expansion in the dispersive parameter offered by the  functional representation of the AL hierarchies will allow us to sidestep these difficulties.
	
	Here is the gist of our approach. The solution of Prop.~\ref{prop:uniqueness} is used to define initial conditions for the tau function of the AL hierarchy while the other difference equation in Remark \ref{rem:xdif} is used to show that these initial conditions reduce to those of the principal hierarchy of the resolved conifold in the dispersionless limit.  
	
	%\section{Main theorem}
	
	We now prove our main theorem, which extends Theorem 1.4 in \cite{Brini} to all genera. 
	
	\begin{thm}\label{thm:main}
		There exists a solution to the small-dispersion expansion of the AL hierarchy satisfying the following conditions:
		\begin{enumerate}
			\item \label{item1} The tau function $\tau_\lambda$ of the solution reduces at leading order in $\lambda$ to the topological tau function of the principal hierarchy of the resolved conifold as $\lambda \rightarrow 0$,
			\item \label{item2} The restriction of the tau function $\tau_\lambda$ of the solution to the small phase space coincides with the exponential of the equivariant GW potential of the resolved conifold with anti-diagonal action, namely
			\begin{equation}\label{eq:expF}
				\exp(F_{\mathrm{ad}}(\lambda,t))=\exp\bigg( \frac{(2\pi)^3\I x^2t}{2\kappa^2\check{\lambda}^2}+\widehat F^+_{\np,0}(\lambda,t)\bigg)\,,
			\end{equation}
			to all orders in $\lambda$.
		\end{enumerate}
	\end{thm}
	\begin{proof}
		
		We impose the following initial conditions on $v_\lambda := \check{\lambda}^{-1}(\Lambda_\lambda -1)s_\lambda$ and $\varpi_\lambda$: % := \check{\lambda}^2\log \tau_\lambda$:
		\begin{equation}
			v_\lambda|_{(z,\widetilde z)=(0,0)}=\frac{2\pi \I x}{\kappa}\,,\quad \left(\frac{2\pi}{\kappa}\right)^2\varpi_\lambda|_{(z,\widetilde z)=(0,0)} =  \frac{(2\pi)^3\I x^2t}{2\kappa^2} + \check{\lambda}^2\widehat F^+_{\np,0}(\lambda,t)\,.
		\end{equation}
		This entails condition \ref{item2}. Moreover, the difference equation in Remark \ref{rem:xdif} implies the following initial condition for $u_\lambda:=-\log(1-\exp(2r_\lambda))$:
		\begin{equation}
			u_\lambda|_{(z,\widetilde z)=(0,0)}=-2\pi\I t\,.
		\end{equation}
		The initial conditions for $u_\lambda$ and $v_\lambda$ reduce to those for $u$ and $v$ in the principal hierarchy of the resolved conifold in the limit $\lambda \rightarrow 0$. Since the full solution to the principal hierarchy is fixed by the initial conditions, condition \ref{item1} holds as well. 
	\end{proof}
	
	\begin{rem}
		Notice that the proof of Theorem \ref{thm:main} still holds if we replace $\widehat F^+_{\np,0}(\lambda,t)$ by any other solution of the difference equation \eqref{eq:diff}. However, our eventual goal in the future is to make an identification of the descendent potential with the Ablowitz--Ladik tau function at all times. This identification is expected to be sensitive to the precise choice of initial conditions. For this reason,  Theorem \ref{thm:main} has been formulated in terms of the distinguished solution $\widehat F^+_{\np,0}(\lambda,t)$ obtained from Bridgeland's Tau function which corresponds to Borel resummation in a chamber preserved under S-duality; see \cite[Sec.\ 6.1]{GWtoDT}.
	\end{rem}

	%%%%%%%%%%%%%%%%%%%%%%%%%%%%%%%%%
	\newcommand{\etalchar}[1]{$^{#1}$}


\begin{thebibliography}{ADK{\etalchar{+}}06}
		
		\bibitem[ADKMV06]{ADKMV}
		Mina Aganagic, Robbert Dijkgraaf, Albrecht Klemm, Marcos Mari\~{n}o, and Cumrun
		Vafa.
		\newblock Topological strings and integrable hierarchies.
		\newblock {\em Comm. Math. Phys.}, 261(2):451--516, 2006.
		
		\bibitem[ASTT21]{GWtoDT}
		Murad Alim, Arpan Saha, Jörg Teschner, and Iván Tulli.
		\newblock Mathematical structures of topological string theory: from GW to DT invariants.
		\newblock {\em arXiv preprint arXiv:2109.06878}, 2021.
		
		\bibitem[AL75]{Ablowitz-Ladik}
		Mark J.\ {Ablowitz} and John F.\ {Ladik}.
		\newblock {Nonlinear differential-difference equations}.
		\newblock {\em Journal of Mathematical Physics}, 16(3):598--603, March 1975.
		
		\bibitem[Ali20]{alim2020difference}
		Murad Alim.
		\newblock Difference equation for the {G}romov--{W}itten potential of the
		resolved conifold.
		\newblock {\em arXiv preprint arXiv:2011.12759}, 2020.
		
		\bibitem[Ali21]{alim2021intrinsic}
		Murad Alim.
		\newblock Intrinsic non-perturbative topological strings.
		\newblock {\em arXiv preprint arXiv:2102.07776}, 2021.
		
		\bibitem[Bar04]{Barnes}
		Ernest W.\ Barnes.
		\newblock On the theory of multiple gamma function.
		\newblock {\em Trans.\ Cambridge Phil.\ Soc.}, 19:374--425, 1904.
		
		\bibitem[BP08]{BP08}
		Jim Bryan and Rahul Pandharipande.
		\newblock {The Local {G}romov--{W}itten theory of curves}.
		\newblock {\em J. Am. Math. Soc.}, 21:101--136, 2008.
		
		\bibitem[Bri12]{Brini}
		Andrea Brini.
		\newblock The local {G}romov--{W}itten theory of {$\mathbb{C}\mathbb{P}^1$} and
		integrable hierarchies.
		\newblock {\em Comm. Math. Phys.}, 313(3):571--605, 2012.
		
		\bibitem[Bri19]{BridgelandDT}
		Tom Bridgeland.
		\newblock Riemann--{H}ilbert problems from {D}onaldson-{T}homas theory.
		\newblock {\em Invent. Math.}, 216(1):69--124, 2019.
		
		\bibitem[Bri20]{BridgelandCon}
		Tom Bridgeland.
		\newblock Riemann--{H}ilbert problems for the resolved conifold.
		\newblock {\em J. Differential Geom.}, 115(3):395--435, 2020.
		
		\bibitem[CSSV17]{Vaz}
		Ricardo Couso-Santamaría, Ricardo Schiappa, and Ricardo Vaz.
		\newblock On asymptotics and resurgent structures of enumerative Gromov–Witten invariants
		\newblock {\em Commun.\ Num.\ Theor.\ Phys.}, 11(4):707-790, 2017.
		
		\bibitem[DN84]{DN84}
		Boris A.\ Dubrovin and Sergei P.\ Novikov.
		\newblock On {P}oisson brackets of hydrodynamic type.
		\newblock {\em Dokl. Akad. Nauk SSSR}, pages 294--297, 1984.
		
		\bibitem[{Dub}08]{Dubrovin}
		Boris A.\ {Dubrovin}.
		\newblock {On universality of critical behaviour in Hamiltonian PDEs}.
		\newblock {\em arXiv e-prints}, page arXiv:0804.3790, April 2008.
		
		\bibitem[Dub14]{Dubrovinrev}
		Boris A.\ Dubrovin.
		\newblock {G}romov--{W}itten invariants and integrable hierarchies of
		topological type.
		\newblock In {\em Topology, geometry, integrable systems, and mathematical
			physics}, volume 234 of {\em Amer. Math. Soc. Transl. Ser. 2}, pages
		141--171. Amer. Math. Soc., Providence, RI, 2014.
		
		\bibitem[DZ01]{DZ}
		Boris A.\ Dubrovin and Youjin Zhang.
		\newblock {Normal forms of hierarchies of integrable PDEs, {F}robenius
			manifolds and {G}romov--{W}itten invariants}.
		\newblock 8 2001.
		
		\bibitem[EHX97]{EHX}
		Tohru Eguchi, Kentaro Hori, and Chuan-Sheng Xiong.
		\newblock Quantum cohomology and {V}irasoro algebra.
		\newblock {\em Phys. Lett. B}, 402(1-2):71--80, 1997.
		
		\bibitem[EHY95]{EHY}
		Tohru Eguchi, Kentaro Hori, and Sung-Kil Yang.
		\newblock Topological {$\sigma$} models and large-{$N$} matrix integral.
		\newblock {\em Internat. J. Modern Phys. A}, 10(29):4203--4224, 1995.
		
		\bibitem[EY94]{EY}
		Tohru Eguchi and Sung-Kil Yang.
		\newblock The topological {${\bf C}{\rm P}^1$} model and the large-{$N$} matrix
		integral.
		\newblock {\em Modern Phys. Lett. A}, 9(31):2893--2902, 1994.
		
		\bibitem[EYY95]{EYY}
		Tohru Eguchi, Yasuhiko Yamada, and Sung-Kil Yang.
		\newblock On the genus expansion in the topological string theory.
		\newblock {\em Rev. Math. Phys.}, 7(3):279--309, 1995.
		
		\bibitem[FP00]{Faber}
		Carel F.\ Faber and Rahul Pandharipande.
		\newblock Hodge integrals and {G}romov--{W}itten theory.
		\newblock {\em Invent. Math.}, 139(1):173--199, 2000.
		
		\bibitem[GV98]{Gopakumar:1998ii}
		Rajesh Gopakumar and Cumrun Vafa.
		\newblock {M theory and topological strings. 1.}
		\newblock 9 1998.
		
		\bibitem[GV99]{GV}
		Rajesh Gopakumar and Cumrun Vafa.
		\newblock On the gauge theory/geometry correspondence.
		\newblock {\em Adv. Theor. Math. Phys.}, 3(5):1415--1443, 1999.
		
		\bibitem[HV00]{HV}
		Kentaro Hori and Cumrun Vafa.
		\newblock Mirror Symmetry.
		\newblock {\em arXiv preprint hep-th/0002222}, 2000.
		
		
		\bibitem[HIV00]{HIV}
		Kentaro Hori, Amer Iqbal and Cumrun Vafa.
		\newblock D-branes and Mirror Symmetry.
		\newblock {\em arXiv preprint 	hep-th/0005247}, 2000.
		
		\bibitem[IKT18]{Iwaki}
		Kohei Iwaki, Tatsuya Koike, and Yumiko Takei.
		\newblock Voros coefficients for the hypergeometric differential equations and
		Eynard--Orantin's topological recursion -- Part I: For the Weber equation.
		\newblock {\em arXiv preprint arXiv:1805.10945}, 2018.
		
		\bibitem[Kon92]{Kontsevich}
		Maxim Kontsevich.
		\newblock Intersection theory on the moduli space of curves and the matrix
		{A}iry function.
		\newblock {\em Comm. Math. Phys.}, 147(1):1--23, 1992.
		
		\bibitem[Oko00]{Okounkov}
		Andrei Okounkov.
		\newblock Toda equations for {H}urwitz numbers.
		\newblock {\em Math. Res. Lett.}, 7(4):447--453, 2000.
		
		\bibitem[Pan00]{Pandharipande}
		Rahul Pandharipande.
		\newblock The {T}oda equations and the {G}romov--{W}itten theory of the
		{R}iemann sphere.
		\newblock {\em Lett. Math. Phys.}, 53(1):59--74, 2000.
		
		\bibitem[Pan02]{threequestions}
		Rahul Pandharipande.
		\newblock Three questions in Gromov--Witten theory.
		\newblock {\em Proceedings of the {I}nternational {C}ongress of
			{M}athematicians}, {V}ol. {II} ({B}eijing, 2002), 503--512.
		
		
		\bibitem[Rui00]{Ruijsenaars1}
		Simon N.\ M.\ Ruijsenaars.
		\newblock On {B}arnes' multiple zeta and gamma functions.
		\newblock {\em Adv. Math.}, 156(1):107--132, 2000.
		
		\bibitem[Tak13]{Takasaki_2013}
		Kanehisa Takasaki.
		\newblock A modified melting crystal model and the {A}blowitz--{L}adik
		hierarchy.
		\newblock {\em Journal of Physics A: Mathematical and Theoretical},
		46(24):245202, {M}ay 2013.
		
		\bibitem[Vek98]{Veks1}
		Vadym E.\ Vekslerchik.
		\newblock Functional representation of the {A}blowitz--{L}adik hierarchy.
		\newblock {\em Journal of Physics A: Mathematical and General},
		31(3):1087--1099, {J}an 1998.
		
		\bibitem[Vek02]{Veks2}
		Vadym E.\ Vekslerchik.
		\newblock Functional representation of the {A}blowitz--{L}adik hierarchy. {II}.
		\newblock {\em Journal of Nonlinear Mathematical Physics}, 9(2):157--180, 2002.
		
		\bibitem[Wit91]{Witten}
		Edward Witten.
		\newblock Two-dimensional gravity and intersection theory on moduli space.
		\newblock In {\em Surveys in differential geometry ({C}ambridge, {MA}, 1990)},
		pages 243--310. Lehigh Univ., Bethlehem, PA, 1991.
		
	\end{thebibliography}
\end{document}